\newcommand{\A}{{\mathbb A}}
\newcommand{\Q}{{\mathbb Q}}
\newcommand{\Z}{{\mathbb Z}}
\newcommand{\C}{{\mathbb C}}
\newcommand{\ee}{\mathrm{e}}
\newcommand{\ii}{\mathrm{i}}
\newcommand{\p}{\mathfrak p}
\newcommand{\Sp}{{\rm Sp}}
\newcommand{\trace}{{\rm tr}}
\newcommand{\HH}{\mathbb{H}}
\newcommand{\mat}[4]{\begin{bsmallmatrix}#1&#2\\#3&#4\end{bsmallmatrix}}
\newcommand{\forget}[1]{}
\def\qdots{\mathinner{\mkern1mu\raise0pt\vbox{\kern7pt\hbox{.}}\mkern2mu
\raise3.4pt\hbox{.}\mkern2mu\raise7pt\hbox{.}\mkern1mu}}
\newenvironment{bsmallmatrix}{\left[\begin{smallmatrix}}{\end{smallmatrix}\right]}
\let\originalleft\left
\let\originalright\right
\renewcommand{\left}{\mathopen{}\mathclose\bgroup\originalleft}
\renewcommand{\right}{\aftergroup\egroup\originalright}
\newtheorem{lemma}{Lemma}[section]
\newtheorem{theorem}[lemma]{Theorem}
\newtheorem{proposition}[lemma]{Proposition}
\tikzstyle{globaldark} = [rectangle, rounded corners,
\tikzstyle{ichar} = [rectangle, rounded corners,
\tikzstyle{globallight} = [rectangle, rounded corners,
\tikzstyle{localdark} = [rectangle, rounded corners,
\tikzstyle{locallightfit} = [rectangle, rounded corners,
\tikzstyle{locallight} = [rectangle, rounded corners,
\tikzstyle{classical} = [rectangle, rounded corners,
\tikzstyle{dchar} = [rectangle, rounded corners,
\tikzstyle{arrow} = [thick,->,>=stealth]
\begin{document}

\thispagestyle{empty}
\begin{center}
 {\bf\Large On the rationality of a paramodular Siegel Eisenstein series}

 \vspace{3ex}
 Erin Pierce

\vspace{3ex}
\begin{minipage}{80ex}
{\small ABSTRACT:  We consider the rationality of the Fourier coefficients of a particular paramodular Siegel Eisenstein series of level $N^2$ with weight $k\geq 4$.  We show that the coefficients lie in a number field.}
\end{minipage}

\end{center}

\tableofcontents

\section{Introduction}

Let $k\geq 4$ be an even integer. Siegel \cite{Siegel1935} considered the weight $k$ Eisenstein series
\begin{equation}\label{introeq1}
 E_k(Z)=\sum_{\gamma=\begin{bsmallmatrix}
     A&B\\C&D
 \end{bsmallmatrix}\in P(\Z)\backslash\Sp(4,\Z)}\det(CZ+D)^{-k},
\end{equation}
where $Z$ is an element of the Siegel upper half space $\HH_2$ of degree $2$ and $P$ is the Siegel parabolic subgroup of $\Sp(4)$ consisting of matrices whose lower left block is zero.  Starting from a primitive Dirichlet character $\eta$ of conductor $N$, in  \cite{pierceschmidt2025siegel} we defined a generalization $E_{k,\eta}$, which is a degree~$2$, weight-$k$ Siegel modular form with respect to $K(N^2)$, where
\begin{align}\label{globaldefofparamodulargroup}
 K(M)=\Sp(4,\Q)\cap\begin{bsmallmatrix}\Z&M\Z&\Z&\Z\\\Z&\Z&\Z&M^{-1}\Z\\\Z&M\Z&\Z&\Z\\M\Z&M\Z&M\Z&\Z\end{bsmallmatrix}.
\end{align} is the paramodular group of level $M$. More precisely, 
\begin{equation}\label{introeq5}
 E_{k,\eta}(Z)=\frac12\sum_{b\in(\Z/N\Z)^\times}\eta(b)E_k(Z,K(N^2),\mathcal{C}_0(bN)),\qquad\text{where }\mathcal{C}_0(x)=\begin{bsmallmatrix}1\\&1\\&x&1\\x&&&1\end{bsmallmatrix}
\end{equation} 
and
\begin{equation}\label{rewritesummationeq11}
 E_k(Z,\Gamma,h):=\sum_{\gamma\in (\Gamma\cap h^{-1}P^{\scriptscriptstyle{1}}(\Q)h)\backslash \Gamma}\det(J(h\gamma,Z))^{-k}.
\end{equation}  The work \cite{pierceschmidt2025siegel} explicitly calculates the Fourier coefficients of this Eisenstein series; see Theorem~\ref{Fourierexpansiontheorem} below. 

Siegel \cite{Siegel1939} gave a formula for the Fourier coefficients of $E_k(Z)$, which is sufficiently explicit to see that they are rational numbers. It follows from general results of Shimura (see Lemma 10.5 of \cite{shimura2000arithmeticity} and also Lemma 6.10 of \cite{schmidtsahapitale2021}) that the Fourier coefficients of a suitably normalized  weight-$k$ modular form with respect to a principal congruence subgroup $\Gamma(N)$ are contained in some number field.  In this work, using the explicit formula in \cite{pierceschmidt2025siegel}, we give the following refinement of these general results.

\begin{theorem}\label{maintheorem}
    Let $E_{k,\eta}$ be as in \eqref{introeq5}, where $\eta$ is a primitive Dirichlet character of conductor $N$.  
    \begin{enumerate}
        \item The Fourier coefficients of $E_{k,\eta}$ lie in the number field generated by $\ii$, the values of $\eta$, and the $N$-th roots of unity, i.e. in $\Q(\ii, \eta,\zeta_N)$.
        \item For fixed $T=\begin{bsmallmatrix}
            n&r/2\\r/2&m
        \end{bsmallmatrix}$, write $r^2-4nm=Df^2$ with $D$ a fundamental discriminant and $f$ an integer.  Let $\chi_D$ be the quadratic Dirichlet character mod $|D|$, $\alpha$ be the unique primitive Dirichlet character associated with $\chi_D\eta$, and $\beta$ be the unique primitive Dirichlet character associated with $\eta^2$. Consider the Fourier coefficient $a(T)$ from Theorem \ref{Fourierexpansiontheorem}. 
        \begin{enumerate}
            \item If ${\rm rank}(T)=1$, then \begin{equation}\label{rank1rationality}
                a(T)\in \Q(\eta, G(\eta)),
            \end{equation} the field generated by the values of $\eta$ and the Gauss sum of $\eta$.
            \item   If ${\rm rank}(T)=2$, then \begin{equation}\label{rank2rationality}
                a(T)\in \Q(\eta,\sqrt{|D|}G(\alpha),G(\beta),\ii),
            \end{equation} the field generated by the values of $\eta$, $\sqrt{|D|}$ times the Gauss sum of $\alpha$, the Gauss sum of $\beta$, and $\ii$.
        \end{enumerate}
    \end{enumerate}
\end{theorem}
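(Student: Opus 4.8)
The plan is to prove the statement by direct inspection of the closed formula for $a(T)$ supplied by Theorem~\ref{Fourierexpansiontheorem}, rather than by any abstract automorphic argument. Since that formula expresses each Fourier coefficient as an explicit product of arithmetic factors, the strategy is to (i) decompose $a(T)$ into its constituent pieces, (ii) identify the number field generated by each piece, and (iii) take the compositum and simplify it using standard identities for Dirichlet $L$-values and Gauss sums. I would first isolate the three types of factors that occur: special values $L(1-j,\chi)$ of Dirichlet $L$-functions at nonpositive integers, Gauss sums $G(\chi)$ of the relevant characters, and elementary normalizing constants built from powers of $\ii$ and the quantity $\sqrt{|D|}$. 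The guiding principle is that each of these lives in a field controlled only by the \emph{primitive} character attached to $\chi$, together with $\ii$.

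For the individual building blocks I would record the following. The $L$-value factors contribute nothing beyond the value fields of their characters: writing $L(1-j,\chi)=-B_{j,\chi}/j$ in terms of a generalized Bernoulli number shows $L(1-j,\chi)\in\Q(\chi)$, and since $\chi_D$ is quadratic one has $\Q(\chi_D\eta)=\Q(\eta)=\Q(\alpha)$ and $\Q(\eta^2)=\Q(\beta)\subseteq\Q(\eta)$. The Gauss sum factors are the source of any enlargement over $\Q(\eta)$: for primitive $\chi$ modulo $m$ one has $G(\chi)\in\Q(\zeta_m)$, together with $G(\chi)\overline{G(\chi)}=\chi(-1)m$ and $\overline{G(\chi)}=\chi(-1)G(\bar\chi)$, while for the quadratic character the classical evaluation $G(\chi_D)=\sqrt{|D|}$ for $D>0$ and $G(\chi_D)=\ii\sqrt{|D|}$ for $D<0$ explains simultaneously the appearance of $\sqrt{|D|}$ and of $\ii$. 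With these facts the refined assertions in part~2 would be obtained by tracking, rank by rank, exactly which Gauss sums survive. For $\mathrm{rank}(T)=1$ the coefficient degenerates to the arithmetic of a single elliptic Eisenstein series twisted by $\eta$, whose coefficients involve only $\eta$ and $G(\eta)$, giving \eqref{rank1rationality}. For $\mathrm{rank}(T)=2$ the nondegenerate term carries the full quadratic structure: the twist $\chi_D\eta$ produces $G(\alpha)$, packaged with the quadratic Gauss sum as $\sqrt{|D|}G(\alpha)$; the normalizing $L$-factor for $\eta^2$ produces $G(\beta)$; and the sign of the quadratic Gauss sum produces $\ii$, yielding \eqref{rank2rationality}.

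Part~1 would then follow formally from part~2 by verifying the containments $\Q(\eta,G(\eta))\subseteq\Q(\ii,\eta,\zeta_N)$ and $\Q(\eta,\sqrt{|D|}G(\alpha),G(\beta),\ii)\subseteq\Q(\ii,\eta,\zeta_N)$. The first is immediate from $G(\eta)\in\Q(\zeta_N)$, and $G(\beta)\in\Q(\zeta_N)$ handles part of the second. The only delicate point is that $\sqrt{|D|}G(\alpha)$, although each factor separately may involve $\zeta_{|D|}$, collapses into $\Q(\ii,\eta,\zeta_N)$: applying multiplicativity of Gauss sums across the coprime parts of the conductor of $\alpha$ to split off the $|D|$-component, the quadratic Gauss sum $G(\chi_D)$ combines with the explicit $\sqrt{|D|}$ so that all $\zeta_{|D|}$-dependence cancels, leaving a factor that is manifestly in $\Q(\ii,\eta,\zeta_N)$.

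I expect the main obstacle to be precisely this kind of collapse and, more broadly, the bookkeeping that reduces the \emph{product of local Gauss sums} naturally emerging from the formula to the \emph{single} Gauss sum of the associated primitive character. A naive bound on the field produced by the raw formula is far larger than the claimed one, since each ramified place and each local density computation contributes its own Gauss sum or root of unity; the substance of the argument is to apply the multiplicativity relations together with $G(\chi)\overline{G(\chi)}=\chi(-1)m$ to amalgamate these into $G(\alpha)$, $G(\beta)$, and $G(\eta)$ up to factors that are either in $\Q(\eta)$ or are powers of $\ii$. Confirming that the quadratic Gauss sum sign is the sole source of the imaginary unit, and that no roots of unity beyond $\zeta_N$ (and those already concealed inside $\sqrt{|D|}G(\alpha)$) are required, is the most technical part of the verification.
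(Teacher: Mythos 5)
Your overall strategy coincides with the paper's: read off the field of definition from the explicit formula in Theorem~\ref{Fourierexpansiontheorem}, convert the $L$-values to generalized Bernoulli numbers and Gauss sums via the special value formula, and reduce everything to the single containment $\sqrt{|D|}G(\alpha)\in\Q(\ii,\eta,\zeta_N)$. But the mechanism you propose for that containment has a genuine gap. You argue by ``multiplicativity of Gauss sums across the coprime parts of the conductor of $\alpha$'' so as to ``split off the $|D|$-component'' and cancel it against the explicit $\sqrt{|D|}$. This works only when $\gcd(|D|,N)=1$, in which case $\alpha=\chi_D\eta$ is primitive of conductor $|D|N$ and $G(\alpha)=\chi_D(N)\eta(|D|)G(\chi_D)G(\eta)$. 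When a prime $p$ divides both $|D|$ and $N$, the local component of $\alpha$ at $p$ is the product $\chi_p\sigma_p$, which need not decompose, may be trivial or quadratic (so that the $p$-part of the conductor of $\alpha$ differs from both $v_p(|D|)$ and $v_p(N)$), and there is no copy of $G(\chi_D)$ to pair with $\sqrt{|D|}$. In that mixed case a loose factor $\sqrt{p^{\,v_p(a)+v_p(|D|)}}$ survives, and one needs the extra arithmetic inputs $\sqrt p\in\Q(\zeta_p,\ii)$ for odd $p$ (Gauss's evaluation of the quadratic Gauss sum) and $\sqrt2\in\Q(\zeta_8)$, together with a case analysis on whether $\chi_p$ is quadratic; this is exactly what the paper's Proposition~\ref{GaussalphasqrtDprop} does, working prime by prime with local $\varepsilon$-factors via Proposition~\ref{epsilonfactorgausssumprop}. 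Your proposal never supplies these inputs, so the ``collapse'' you flag as the main obstacle is asserted rather than proved, and as stated it would fail at primes dividing $\gcd(|D|,N)$.

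A second omission: the rank-$2$ formula contains the local factors $K(k,T,\chi_p)$ of \eqref{defofK}, which are $p$-adic oscillatory integrals summed over infinitely many $j$, not $L$-values, Gauss sums, or elementary constants. Their membership in $\Q(\eta)$ is not formal; it requires showing the defining sum is actually finite (either because $S(j+1,n_p)$ is empty for all but one $j$, or because the integral vanishes for $j\gg0$ by ramification of $\chi_p$), which is Proposition~\ref{rationalityofKprop} in the paper. Your plan subsumes these under generic ``local density computations'' to be amalgamated into Gauss sums, but in the paper's formula they are already separated from the Gauss sums and must be handled by a separate convergence-and-rationality argument.
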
  
\noindent Note that $L:=\Q(\eta,\sqrt{|D|}G(\alpha),G(\beta),\ii)$ is a subfield of $\Q(\eta,\zeta_N,\ii)$.  In general, it is a proper subfield.  For example, if $\eta^2=1$ and $\alpha=\chi_D\eta$ is primitive, then $L=\Q(\sqrt{N},\ii)\subseteq\Q(\zeta_N,\ii)$. \\

\noindent \textbf{Acknowledgements.} I would like to thank Ralf Schmidt for many helpful discussions and for helping to improve the exposition.

\section{Notation}

We will use the notation
\begin{equation}\label{deltaeq}
 \delta_*=\begin{cases}
           1&\text{if the condition $*$ is satisfied},\\
           0&\text{if the condition $*$ is not satisfied}.
          \end{cases}
\end{equation}  We will use the notation $\zeta_j$ to denote a primitive $j$-th root of unity.  For a Dirichlet character $\eta$, we use $\Q(\eta)$ to denote the extension of $\Q$ generated by the values of $\eta$.  We let $v$ denote the $p$-adic valuation.

For a nonzero integer $r=\pm \prod p^{\alpha_p}$ and a positive integer $N$, we set
\begin{equation}\label{Nnotation}
    r_{\scriptscriptstyle N}=\prod_{p\mid N} p^{\alpha_p} \quad \text{ and }\quad r_{\scriptscriptstyle {\hat N}}=\pm\prod_{p\nmid N} p^{\alpha_p}.
\end{equation} Clearly, $r=r_{\scriptscriptstyle N} r_{\scriptscriptstyle {\hat N}}$.  For a Dirichlet character $\eta$ and an integer $\ell$, we let
\begin{equation}\label{divisorsumwithcharacter}
    \sigma_{\ell,\eta}(a)=\sum_{d\mid a}\eta^{-1}(d)\, d^\ell,
\end{equation} for any nonzero integer $a$. 

\section{Gauss sums and epsilon factors}
Recall that the Gauss sum of a Dirichlet character $\chi\bmod m$ is defined as
\begin{equation}\label{Gausssumeq1}
 G(\chi)=\sum_{k=0}^{m-1}\chi(k)\ee^{2\pi \ii k/m}=\sum_{k\in(\Z/m\Z)^\times}\chi(k)\ee^{2\pi \ii k/m}.
\end{equation}
For the principal character we have $G(\chi)=1$.  By definition, the Gauss sum of the Legendre symbol mod $p$ takes values in $\Q(\zeta_p)$.  By a famous result of Gauss,
\begin{equation}
    G\left(\left(\frac{\cdot}{p}\right)\right)=\begin{cases}
        \sqrt{p} & \text{if } p\equiv 1\bmod 4\\
        \ii\sqrt{p} & \text{if } p\equiv 3\bmod 4.
    \end{cases}
\end{equation}  Hence, for odd primes $p$,
\begin{equation}\label{sqrtoddp}
    \sqrt{p}\in\Q(\zeta_p,\ii)\overset{p\mid N}{\subseteq}\Q(\zeta_N,\ii).
\end{equation}  We also recall that 
\begin{equation}\label{sqrtevenp}
    \sqrt{2}\in\Q(\zeta_8)\overset{v_2(N)\geq 3}{\subseteq}\Q(\zeta_N).
\end{equation}  The following properties are well known from \cite{TateThesis} (see also Sect.~3.1 of \cite{Bump1997} for more contemporary notation).  Let $\psi$ be an additive character of $\Q_p$ with conductor $\Z_p$ and let $\chi$ be a character of $\Q_p^\times$ with conductor exponent $a(\chi)$.
\begin{align}
    &\text{If }a(\chi)>0\text{, then }\nonumber\\&\qquad\varepsilon(\tfrac{1}{2},\chi,\psi)=p^{a(\chi)/2}\chi(x)^{-1}\int\limits_{\Z_p^\times} \chi(\mu^{-1})\psi(x\mu)\, d\mu\text{ with }x\text{ such that }-v(x)=a(\chi). \label{epsilonfactordef}\\
    &\text{If }\chi\text{ is unramified, then }  \varepsilon(\tfrac{1}{2},\chi,\psi)=1.\\
    &\text{If }\chi \text{ is quadratic, we have }\varepsilon(\tfrac{1}{2},\chi,\psi)^2=\chi(-1)=\pm 1. \label{rationalityeq17}\\
    &\text{For } \mu \text{ an unramified character}, \varepsilon(s,\mu\chi,\psi)=\mu(p)^{a(\chi)}\varepsilon(s,\chi,\psi). \label{rationalityeq18}
\end{align}  Using the integral formula \eqref{epsilonfactordef} and an inductive argument, one can easily prove the following relationship between the Gauss sum $G(\eta)$ and local $\varepsilon$-factors.

\begin{proposition}\label{epsilonfactorgausssumprop}
    Let $N=\prod p^{n_p}$.  Let $\eta$ be a primitive Dirichlet character mod $N$ corresponding to the adelic character $\chi=\otimes \chi_p$.  Let $\psi=\prod \psi_p$ be the character of $\Q/\A$ for which $\psi_\infty=e^{-2\pi\ii x}$.  Then
    \begin{equation}\label{epsilonprop}
        \prod\limits_{p<\infty}\varepsilon(\tfrac{1}{2},\chi_p,\psi_p)=\frac{\eta(-1)G(\eta)}{\sqrt{N}}.
    \end{equation}
\end{proposition}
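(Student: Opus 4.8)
The plan is to discard the unramified places, evaluate each remaining local factor by converting the integral in \eqref{epsilonfactordef} into a finite exponential sum, and then assemble the factors by induction on the number of primes dividing $N$, using the classical Chinese-remainder factorization of the Gauss sum. First I would note that for $p\nmid N$ the component $\chi_p$ is unramified, so $\varepsilon(\tfrac12,\chi_p,\psi_p)=1$ and the product runs only over $p\mid N$. Fix such a $p$ and set $n_p=a(\chi_p)=v_p(N)$. In \eqref{epsilonfactordef} I would take $x=p^{-n_p}$, so that $-v(x)=n_p$. Since $\psi_p$ is trivial on $\Z_p$ but not on $p^{-1}\Z_p$, the value $\psi_p(p^{-n_p}\mu)$ depends only on $\mu\bmod p^{n_p}$ and equals $\ee^{\pm 2\pi\ii\,\mu/p^{n_p}}$, with the sign fixed by the normalization $\psi_\infty=\ee^{-2\pi\ii x}$. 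With the additive Haar measure giving $\Z_p$ volume $1$, the integral collapses to $p^{-n_p}\sum_{\mu\in(\Z/p^{n_p}\Z)^\times}\chi_p(\mu)^{-1}\ee^{\pm2\pi\ii\,\mu/p^{n_p}}$; identifying $\chi_p|_{\Z_p^\times}$ with the $p$-part $\eta_{(p)}$ of $\eta$, this sum is the Gauss sum $G(\eta_{(p)})$ up to a sign $c_p$ coming from the substitution $\mu\mapsto-\mu$ needed to match the convention $\ee^{2\pi\ii k/m}$ of \eqref{Gausssumeq1}.

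The prefactor contributes $p^{n_p/2}\chi_p(p^{-n_p})^{-1}=p^{n_p/2}\chi_p(p)^{n_p}$, and $\chi_p(p)$ is pinned down by the triviality of the global character $\chi$ on $\Q^\times$: evaluating $\chi(p)=1$, the archimedean factor ($\chi_\infty(p)=1$ since $p>0$) and the unramified factors drop out, leaving $\chi_p(p)=\prod_{q\mid N,\,q\neq p}\eta_{(q)}(p)$. Thus
\begin{equation}\label{localfactorsketch}
 \varepsilon(\tfrac12,\chi_p,\psi_p)=\frac{c_p}{p^{n_p/2}}\Big(\prod_{q\mid N,\,q\neq p}\eta_{(q)}(p)\Big)^{n_p}G(\eta_{(p)}),\qquad c_p=\eta_{(p)}(-1).
\end{equation}
Taking the product over $p\mid N$, the powers of $p$ multiply to $\prod_{p\mid N}p^{n_p/2}=\sqrt N$, giving the denominator of \eqref{epsilonprop}. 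For the local Gauss sums I would invoke the factorization of a primitive Gauss sum: if $N=N_1N_2$ with $\gcd(N_1,N_2)=1$ and $\eta=\eta_1\eta_2$, then $G(\eta)=\eta_1(N_2)\eta_2(N_1)G(\eta_1)G(\eta_2)$, which on iteration gives $G(\eta)=\big(\prod_{p\mid N}\eta_{(p)}(N/p^{n_p})\big)\prod_{p\mid N}G(\eta_{(p)})$. A symmetric double-product reindexing shows that the cross-terms $\prod_p\eta_{(p)}(N/p^{n_p})$ arising here coincide with the twists $\prod_p\big(\prod_{q\neq p}\eta_{(q)}(p)\big)^{n_p}$ coming from the factors $\chi_p(p)^{n_p}$ in \eqref{localfactorsketch}, so these two families of roots of unity cancel. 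What remains is $\prod_{p\mid N}c_p=\prod_{p\mid N}\eta_{(p)}(-1)=\eta(-1)$ times $G(\eta)/\sqrt N$, which is exactly \eqref{epsilonprop}.

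I would phrase the assembly as an induction on the number of prime divisors of $N$: the base case $N=p^{n}$ has no cross-terms (here $\chi_p(p)=1$) and is just the single local computation, while the inductive step adjoins one prime, peels off the corresponding local Gauss sum together with its two-modulus twist $\eta_1(N_2)\eta_2(N_1)$, and matches this against the newly included local factor. The routine parts are the explicit evaluation of the exponential sum and the Haar-measure bookkeeping. The main obstacle is the precise tracking of the two families of roots of unity—the uniformizer values $\chi_p(p)^{n_p}$ from $\chi_p(x)^{-1}$ and the Chinese-remainder cross-terms $\eta_{(p)}(N/p^{n_p})$—together with justifying the single surviving sign: confirming that $c_p=\eta_{(p)}(-1)$, so that the product is exactly $\eta(-1)$ and not some other character value, hinges on the compatibility between the sign of $\psi_p$ forced by $\psi_\infty=\ee^{-2\pi\ii x}$ and the local--global normalization of $\chi$ on units and uniformizers, and is the entire content beyond formal manipulation.
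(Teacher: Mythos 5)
Your proposal follows exactly the route the paper indicates for this proposition (which it states without a written proof): kill the unramified places, evaluate each ramified local factor via the integral formula \eqref{epsilonfactordef} as a finite exponential sum, and assemble the local Gauss sums by induction on the prime divisors of $N$ using the Chinese-remainder factorization $G(\eta_1\eta_2)=\eta_1(N_2)\eta_2(N_1)G(\eta_1)G(\eta_2)$, with the uniformizer values $\chi_p(p)^{n_p}$ cancelling the cross-terms. The bookkeeping is correct, and you rightly isolate the only delicate point --- that the sign of $\psi_p$ forced by $\psi_\infty=\ee^{-2\pi\ii x}$, together with the normalization of $\chi_p|_{\Z_p^\times}$ against $\eta_{(p)}$, produces exactly the factor $\eta_{(p)}(-1)$ via the substitution $\mu\mapsto-\mu$ --- which is the one step you should actually write out rather than assert, since with the opposite (equally common) conventions the same computation yields $G(\eta)/\sqrt{N}$ with no $\eta(-1)$.
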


\section{Special values of Dirichlet \texorpdfstring{$L\,$-functions}{}}
It is well known (see Corollary VII.1.10 of \cite{Neukirch1999}) that
\begin{equation}\label{specialvalueseq2}
 \zeta(2k)=(-1)^{k-1}\frac{(2\pi)^{2k}}{2(2k)!}B_{2k}
\end{equation}
for any positive integer~$k$, where $B_{2k}$ represents a Bernoulli number. For a non-trivial, primitive Dirichlet character $\eta\bmod m$, we define the generalized Bernoulli numbers $B_{k,\eta}$ by
\begin{equation}\label{specialvalueseq3}
 \sum_{a=1}^m\eta(a)\frac{t\ee^{at}}{\ee^{mt}-1}=\sum_{k=0}^\infty B_{k,\eta}\frac{t^k}{k!}.
\end{equation}
Let
\begin{equation}\label{specialvalueseq4}
 \epsilon=\begin{cases}
           0&\text{if }\eta(-1)=1,\\
           1&\text{if }\eta(-1)=-1.
          \end{cases}
\end{equation}
Then $B_{k,\eta}=0$ if $k\not\equiv\epsilon\bmod2$.  By Corollary~VII.2.10 of \cite{Neukirch1999}, for a positive integer $k$ we have
\begin{equation}\label{specialvalueseq6}
 L(k,\eta)=(-1)^{1+\frac{k-\epsilon}2}\frac{G(\eta)}{2\ii^\epsilon}\left(\frac{2\pi}m\right)^k\frac{B_{k,\bar\eta}}{k!}\qquad\text{if }k\equiv\epsilon\bmod2.
\end{equation}

\section{Rationality of \texorpdfstring{$K(s,T,\chi_p)$}{}}
Let $\chi=\otimes\chi_p$ be the idele class character corresponding to the primitive Dirichlet character $\eta$ of conductor $N=\prod p^{n_p}$ so that $a(\chi_p)=n_p$.  Let $T=\begin{bsmallmatrix}
    n&r/2\\r/2&m
\end{bsmallmatrix}$ be a non-zero positive semi-definite matrix with integers $n,r,m$ such that $N^2\mid m$.  For the rest of this section, we fix a prime $p$.  For $s\in\C$, we define
\begin{equation}\label{defofK}
 K(s,T,\chi_p):=\sum_{j=1-n_p}^\infty\;p^{j(2-s)}\int\limits_{S(j+1,n_p)}\chi_p\left(n\mu^{-1}+rp^{-n_p}+m\mu p^{-2n_p}\right)\,d\mu,
\end{equation} where 
\begin{equation}\label{defofS}
    S(j+1,n_p)=\{\mu\in\Z_p^\times \mid v(n+r\mu p^{-n_p}+m\mu^2 p^{-2n_p})=j\}.
\end{equation} In this section, we will state the following without full proof.  For details, see \cite{Pierce2026}.

\begin{proposition}\label{rationalityofKprop}
    Define $K(s,T,\chi_p)$ as in \eqref{defofK}.  Then
     \begin{equation}\label{rationalityofKeq1}
         K(k,T,\chi_p)\in \Q(\eta).
     \end{equation}
 \end{proposition}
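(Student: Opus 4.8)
The plan is to reduce the claim to two facts: that every value of the local character $\chi_p$ lies in $\Q(\eta)$, and that the defining series, after a single factorization, is a finite sum plus at most one geometric tail of such values weighted by rational numbers. First I would check that $\chi_p(\Q_p^\times)\subseteq\Q(\eta)$. For units this is immediate from the Chinese Remainder Theorem: given $u\in\Z_p^\times$, choosing an integer $a$ congruent to $u$ modulo $p^{n_p}$ and to $1$ modulo $N/p^{n_p}$ yields $\chi_p(u)=\eta(a)^{\pm1}$, which lies in $\Q(\eta)$ since $\Q(\eta)$ is stable under complex conjugation. For the uniformizer I would use that $\chi=\otimes_v\chi_v$ is a finite-order idele class character trivial on $\Q^\times$; evaluating at the principal idele $(p,p,\dots)$ gives $\chi_p(p)=\prod_{v\neq p}\chi_v(p)^{-1}$. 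Here $\chi_\infty(p)=1$ because $p>0$ and $\chi_\infty$ is at most a sign, $\chi_q(p)=1$ for finite $q\nmid N$ because $p\in\Z_q^\times$ and $\chi_q$ is unramified, and $\chi_q(p)\in\Q(\eta)$ for the finitely many $q\mid N$ with $q\neq p$ by the unit case. Thus $\chi_p(p)\in\Q(\eta)$ as well, and in particular $|\chi_p(p)|=1$.

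Next I would factor the argument of $\chi_p$. Setting $Q(\mu)=n+r\mu p^{-n_p}+m\mu^2 p^{-2n_p}$, one has $n\mu^{-1}+rp^{-n_p}+m\mu p^{-2n_p}=\mu^{-1}Q(\mu)$, and on $S(j+1,n_p)$ we have $v(Q(\mu))=j$, say $Q(\mu)=p^j u(\mu)$ with $u(\mu)\in\Z_p^\times$. The integrand then equals $\chi_p(p)^j\,\chi_p(\mu^{-1}u(\mu))$ with $\mu^{-1}u(\mu)\in\Z_p^\times$, so that
\[
 K(k,T,\chi_p)=\sum_{j=1-n_p}^\infty\bigl(p^{\,2-k}\chi_p(p)\bigr)^j\,I_j,\qquad I_j:=\int\limits_{S(j+1,n_p)}\chi_p\bigl(\mu^{-1}u(\mu)\bigr)\,d\mu.
\]
Partitioning $S(j+1,n_p)$ into finitely many cosets of $1+p^{c}\Z_p$ (with $c\geq n_p$ large enough that both $v(Q)$ and $\mu\mapsto\chi_p(\mu^{-1}u(\mu))$ are constant on cosets) shows that each $I_j$ is a finite $\Q(\eta)$-combination of values of $\chi_p$ with measures that are powers of $p$; in particular $I_j\in\Q(\eta)$.

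It remains to control the sum over $j$, and this is the main obstacle. The shape of $S(j+1,n_p)$ for large $j$ is governed by the roots of $Q$ in $\Z_p^\times$, equivalently by $v(r^2-4nm)$ and their multiplicity. On each sufficiently deep shell around a root, the substitution $\mu\mapsto\mu^{-1}u(\mu)$ sweeps out a coset of $\Z_p^\times$ (at a simple root) or of the squares (at a double root), and integrating $\chi_p$ over it gives $0$ unless $\chi_p$, respectively $\chi_p^2$, is trivial on $\Z_p^\times$. Hence in the ramified generic case $I_j=0$ for all large $j$ and the series is genuinely finite, while in the remaining cases (notably $\chi_p$ unramified, or a double root with $\chi_p^2$ trivial) one obtains a single geometric tail with ratio $p^{\,2-k}\chi_p(p)$ or $p^{\,2(2-k)}\chi_p(p)^2$; since $k\geq4$ this ratio has absolute value $<1$, so the tail converges to $(1-\text{ratio})^{-1}$ times a $\Q(\eta)$-constant. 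Assembling the finitely many honest terms with the at most one geometric tail, each weight $(p^{\,2-k}\chi_p(p))^j$ and each $I_j$ lying in $\Q(\eta)$, gives $K(k,T,\chi_p)\in\Q(\eta)$. Equivalently, one is proving that $K(s,T,\chi_p)$ is a rational function of $p^{-s}$ with coefficients in $\Q(\eta)$ that is regular at $s=k$; the delicate part is organizing the case analysis on $v(r^2-4nm)$ and the root multiplicities and verifying the deep-shell vanishing in each case, everything else being bookkeeping once all values of $\chi_p$ and all occurring Haar measures are known to be rational over $\Q(\eta)$.
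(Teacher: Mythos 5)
Your strategy is sound, and its core mechanism---decomposing $\Z_p^\times$ into shells around the roots of $Q(\mu)=n+r\mu p^{-n_p}+m\mu^2p^{-2n_p}$ and observing that the deep-shell integrals vanish because $\chi_p$ is ramified---is exactly what the paper does in its illustrative computations \eqref{calcofKeq17}, \eqref{Krationalityp=2eq13}. The paper, however, explicitly states the proposition ``without full proof,'' deferring details elsewhere, and argues only that the sum over $j$ terminates; you supply two things it leaves implicit or omits. First, you verify that every value of $\chi_p$ lies in $\Q(\eta)$---the unit values via the Chinese Remainder Theorem and $\chi_p(p)$ via triviality of $\chi$ on principal ideles---so that each term $p^{j(2-k)}\chi_p(p)^jI_j$ genuinely lies in $\Q(\eta)$; the paper takes this for granted. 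Second, you allow for a non-terminating geometric tail (unramified $\chi_p$, or a double root with $\chi_p^2$ trivial on units) and sum it explicitly. In the paper's application this fallback is never needed: $K(k,T,\chi_p)$ enters the Fourier coefficient only for $p\mid N$ (so $\chi_p$ is ramified) and for $T$ of rank $2$ (so $r^2-4nm\neq 0$ and $Q$ has no double root), which is why the paper can assert the sum is finite. But your treatment makes the proposition correct as literally stated, for an arbitrary nonzero positive semidefinite $T$, where the sum need not terminate. Two cosmetic imprecisions that do not affect the conclusion: the true ratio of consecutive nonzero terms also carries the volume decay of the shells (roughly $p^{1-k}\chi_p(p)$ at a simple root rather than $p^{2-k}\chi_p(p)$), and there may be two simple roots in $\Z_p^\times$ and hence two tails rather than ``at most one''; in all cases the ratio lies in $\Q(\eta)$ and has absolute value less than $1$ for $k\geq 4$, so the argument goes through.
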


\begin{proof}
    Note that we set $s=k$ so that the value $p^{2-s}$ is rational.  To prove \eqref{rationalityofKeq1}, it suffices to show that the sum in \eqref{defofK} is finite, which is true for one of two reasons. In some cases, the set $S(j+1,n_p)$ is empty for all but one value of $j$.  In all other cases, the integral equals zero for large $j$ since $\chi_p$ is ramified.  We will give illustrative examples for both situations.

As an example for the first situation, suppose $v(n)<\min(v(r)-n_p,v(m)-2n_p)$.  Then 
\begin{equation}
    S(j+1,n_p)=\{\mu\in\Z_p^\times\mid v(n)=j\}=\Z_p^\times.
\end{equation} Hence, the sum in $K(s,T,\chi_p)$ collapses to the term $j=v(n)$.

As an example for the second situation, suppose $v(n)=v(r)-n_p<v(m)-2n_p$.  Recall that $r^2-4mn=Df^2$ for $D$ a fundamental discriminant and $f$ an integer. We have the identity
\begin{align}
    n\mu^{-1}+rp^{-n_p}+m\mu p^{-2n_p}&=n\mu\left(\left(\mu^{-1}+\frac{r}{2np^{n_p}}\right)^2-\frac{z^2f^2}{4n^2 p^{2n_p}}\right). \label{calcofKeq13}
\end{align}  Assume $j>v(n)$.  Assume first that \underline{$p$ is odd}.  The following identity
\begin{equation}\label{calcofKeq8}
    p\Z_p\ni \frac{m}{np^{2n_p}}=\left(\frac{-r}{2np^{n_p}}\right)^2-\frac{Df^2}{4n^2p^{2n_p}},
\end{equation} implies $D=z^2$ for some $z\in\Z_p^{\times}$. Now
\begin{equation}\label{calcofKeq10}
    \frac{4mn}{r^2}\in p\Z_p\implies z^2\frac{f^2}{r^2}\in 1+p\Z_p\implies \left(z\frac{f}{r}-1\right)\left(z\frac{f}{r}+1\right)\in p\Z_p.
\end{equation}  Replacing $z\mapsto -z$ if necessary, we can assume $v(zf-r)=v(r)$ and $v(zf+r)>v(r)$, and hence
\begin{equation}\label{calcofKeq12}
    \frac{zf-r}{2np^{n_p}}\in\Z_p^\times \quad \text{ and } \quad \frac{-zf-r}{2np^{n_p}}\in p\Z_p.
\end{equation}  By \eqref{calcofKeq13} and \eqref{calcofKeq12} the integral in \eqref{defofK} equals
\begin{align}\label{calcofKeq17}
&\int\limits_{\frac{2np^{n_p}}{zf-r}\left(1+p^{j-v(n)}\Z_p^\times\right)}\chi_p(n\mu)\chi_p\left(\left(\mu^{-1}+\frac{r}{2np^{n_p}}\right)^2-\frac{z^2f^2}{4n^2 p^{2n_p}}\right)\, d\mu\nonumber\\
    &\qquad\qquad=\int\limits_{1+p^{j-v(n)}\Z_p^\times}\chi_p\left(\mu\frac{r^2}{2(zf-r)p^{n_p}}\right)\,\chi_p\left(\left(\frac{zf-r}{r}\mu^{-1}+1\right)^2-\frac{z^2f^2}{r^2}\right)\, d\mu\nonumber\\ 
    & \qquad\qquad\overset{j\gg 0}{=}\chi_p\left(\frac{r^2}{2(zf-r)p^{n_p}}\right)\, p^{v(n)-j}\, \int\limits_{\Z_p^\times}\,\chi_p\left(\left(\frac{zf-r}{r}(\mu p^{j-v(n)}+1)+1\right)^2-\frac{z^2f^2}{r^2}\right)\, d\mu\nonumber\\
    &\qquad\qquad \overset{j\gg 0}{=}\chi_p\left(p^{j-v(n)-n_p}\right)\, p^{v(n)-j}\, \int\limits_{\Z_p^\times}\,\chi_p\left(zf\mu\right)\, d\mu=0.
\end{align}   Hence, the sum in \eqref{defofK} is indeed finite.
%
%
%

Assume now that \underline{$p=2$}.  The identity
\begin{equation}\label{Krationalityp=2eq4}
    8\Z_2\ni \frac{m}{np^{2n_p-2}}=\left(-\frac{r}{np^{n_p}}\right)^2 - \frac{Df^2}{n^2p^{2n_p}},
\end{equation} implies $v(D)$ is even, hence $v(D)=0$ or $2$.  If \underline{$v(D)=0$}, $D=z^2$ for some $z\in\Z_2^\times$ and $v(f)=v(r)$. Notice that if
\begin{equation}\label{Krationalityp=2eq9}
    \mu^{-1}+\frac{r-zf}{np^{n_p+1}}, \quad \mu^{-1}+\frac{r+zf}{np^{n_p+1}},
\end{equation} are both in $\Z_2^\times$ or both in $2\Z_2$, their difference $\frac{-zf}{np^{n_p}}$ must be in $2\Z_2$, which is a contradiction.  Replacing $z\mapsto -z$ if necessary, we can assume 
\begin{equation}\label{Krationalityp=2eq11}
    \frac{r-zf}{np^{n_p+1}}\in\Z_2^\times.
\end{equation} By \eqref{calcofKeq13} and \eqref{Krationalityp=2eq11}, the integral in \eqref{defofK} equals
\begin{align}\label{Krationalityp=2eq13}
    &\int\limits_{\frac{np^{n_p+1}}{zf-r}\left(1+2^{j-v(n)}\Z_2^\times\right)}\chi(n\mu)\chi\left(\left(\mu^{-1}+\frac{r}{np^{n_p+1}}\right)^2-\frac{z^2f^2}{n^2p^{2n_p+2}}\right)\, d\mu\nonumber\\
    &\qquad\qquad \overset{j\gg 0}{=}\int\limits_{1+2^{j-v(n)}\Z_2^\times}\chi\left(\frac{1}{(zf-r)p^{n_p+1}}\right)\chi\left(\left((zf-r)\mu^{-1}+r\right)^2-z^2f^2\right)\, d\mu\nonumber\\
    &\qquad\qquad =p^{v(n)-j}\int\limits_{\Z_2^\times}\chi\left(\frac{1}{p^{n_p+1}}\right)\chi\left(2^{j-v(n)}\mu\left(2zf\left(1+\frac{(zf-r)\mu2^{j-v(n)}}{2zf}\right)\right)\right)\, d\mu\nonumber\\
    &\qquad\qquad \overset{j\gg 0}{=}p^{v(n)-j}\int\limits_{\Z_2^\times}\chi\left(\frac{2^{j-v(n)+1} zf}{p^{n_p+1}}\right)\chi\left(\mu \right)\, d\mu=0.
\end{align}  If \underline{$v(D)=2$}, $D=4z^2$ for some $z\in \Z_2^\times$. Then by a similar calculation, for $j\gg 0$, by \eqref{calcofKeq13}, the integral in \eqref{defofK} equals
\begin{align}\label{Krationalityp=2eq23}
    &\int\limits_{\frac{np^{n_p+1}}{2zf-r}\left(1+2^{j-v(n)}\Z_2^\times\right)}\chi(n\mu)\chi\left(\left(\mu^{-1}+\frac{r}{np^{n_p+1}}\right)^2-\frac{z^2f^2}{n^2p^{2n_p}}\right)\, d\mu=0.
\end{align}  Hence, in all cases, the sum in \eqref{defofK} is finite.
\end{proof} 
%
%
%


%

\section{Rationality of the Fourier coefficients}
For $D$ a fundamental discriminant, let $\chi_D$ be the primitive quadratic Dirichlet character mod~$|D|$.  Recall that $\eta$ is a primitive Dirichlet character of conductor $N$.  For integers $e,f$ with $e\mid f$, we define
\begin{equation}\label{tildeHetadef}
 \tilde H_{D,s,\eta}(e,f)=\sum_{d\mid e}\eta(d)^{-1}d^{s-1}\sum_{g\mid\frac fd}\mu(g)\chi_D(g)\eta(g)^{-1}g^{s-2}\sum_{h\mid\frac{f}{dg}}\eta(h)^{-2}h^{2s-3},
\end{equation} where $\mu$ is the M\"obius $\mu$ function.    By $\chi_D\eta$ we mean the character of conductor $\ell:={\rm lcm}(|D|,N)$ such that $(\chi_D\eta)(x)=\chi_D(x)\eta(x)$ for $x$ relatively prime to $\ell$. This character is not necessarily primitive.  The same can be said about $\eta^2$, which has conductor $N$.  We let $\alpha$ be the unique primitive Dirichlet character mod $a$ and let $\beta$ be the unique primitive Dirichlet character mod $b$ such that the following diagrams commute.
 \begin{equation}\label{rationalityeq10}
     \begin{tikzcd}
    (\Z/\ell\Z)^\times \arrow{r}{} \arrow[swap]{dr}{\chi_D\eta} & (\Z/a\Z)^\times \arrow{d}{\alpha} \\
     & \C^\times
  \end{tikzcd} \qquad
  \begin{tikzcd}
    (\Z/N\Z)^\times \arrow{r}{} \arrow[swap]{dr}{\eta^2} & (\Z/b\Z)^\times \arrow{d}{\beta} \\
     & \C^\times
  \end{tikzcd}
 \end{equation} 

\noindent We recall Theorem 3.4.1 of \cite{pierceschmidt2025siegel}:

\begin{theorem}\label{Fourierexpansiontheorem}
 Let $k\geq4$ be an integer, and let $\eta$ be a primitive Dirichlet character of conductor $N=\prod p^{n_p}$. Then the function 
 \begin{equation}\label{eketadef}
 E_{k,\eta}(Z):=\sum_{b\in(\Z/N\Z)^\times/\{\pm1\}}\eta(b)E_k(Z,K(N^2),\mathcal{C}_0(bN)),
\end{equation} has a Fourier expansion
 \begin{equation}\label{Fourierexpansiontheoremeq1}
  E_{k,\eta}(Z)=\delta_{\eta=1}+\sum_{T\neq0}a(T)\ee^{2\pi\ii\trace(TZ)},
 \end{equation}
 where $T$ runs over non-zero positive semi-definite matrices $\mat{n}{r/2}{r/2}{m}$ with integers $n,r,m$ such that $N^2\mid m$, and where, for $\mathrm{rank}(T)=1$,
 \begin{align}\label{Fourierexpansiontheoremrank1}
  a(T)&=\frac{(-2\pi \ii)^k}{(k-1)!}
  \begin{cases}
    \displaystyle\frac{ \sigma_{k-1}(n)}{\zeta(k)}  & \text{if }N=1,\:T=\mat{n}{0}{0}{0},\:n>0,\\[1ex] 
   \displaystyle \frac{\sigma_{k-1,\eta}(e_{\scriptscriptstyle \hat N})}{L(k,\eta)}\frac{\eta(r_{\scriptscriptstyle{\hat N}})}{\eta(2_{\scriptscriptstyle{\hat N}})}e_{\scriptscriptstyle N}^{k-1}
   &\text{if }m>0,\:r_{\scriptscriptstyle N}=\frac{(2m)_{\scriptscriptstyle N}}{N},\\
   0&\text{otherwise},
  \end{cases}
\end{align}
and for $\mathrm{rank}(T)=2$,
 \begin{align}\label{Fourierexpansiontheoremrank2}
  &a(T)=\frac{(4\pi)^{2k-1}\det(T)^{k-\frac32}}{2(2k-2)!}N^{2-2k} f_{\scriptscriptstyle\hat N}^{3-2k}\eta( f_{\scriptscriptstyle\hat N}^2)\tilde H_{D,k,\eta}( e_{\scriptscriptstyle\hat N}, f_{\scriptscriptstyle\hat N})\nonumber\\
    &\qquad\frac{L(k-1,\chi_D\eta)}{L(k,\eta)L(2k-2,\eta^2)}G(\eta)\Bigg(\prod_{\substack{p<\infty\\p\mid N\\p\nmid r}}\chi_p(r)\Bigg)\Bigg(\prod_{\substack{p<\infty\\p\mid N\\p\mid r}}p^{n_p(2-k)}\chi_p(p^{n_p})K(k,T,\chi_p)\Bigg).
 \end{align}
 Here, $r^2-4nm=Df^2$ with $D$ a fundamental discriminant and $f$ an integer, $e=\gcd(n,r,m)$, and we employ the notations \eqref{Nnotation} and \eqref{divisorsumwithcharacter}.
\end{theorem}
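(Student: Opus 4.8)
The plan is to take the closed formulas \eqref{Fourierexpansiontheoremrank1} and \eqref{Fourierexpansiontheoremrank2} for $a(T)$ as given and, in each rank, isolate precisely the factors that could be transcendental or could leave the target field; all remaining factors are manifestly products of rational numbers, values of $\eta$ (hence in $\Q(\eta)$), and quantities already known to lie in $\Q(\eta)$ -- namely $\tilde H_{D,k,\eta}(e_{\scriptscriptstyle\hat N},f_{\scriptscriptstyle\hat N})$ from \eqref{tildeHetadef}, the divisor sums $\sigma_{k-1,\eta}$ from \eqref{divisorsumwithcharacter}, and $K(k,T,\chi_p)$ from Proposition~\ref{rationalityofKprop}. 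The only genuinely transcendental ingredients are the powers of $\pi$ and the special $L$- and $\zeta$-values, and the whole argument turns on the observation that the normalizing powers of $\pi$ in the prefactors cancel exactly against the powers of $\pi$ produced by the evaluations \eqref{specialvalueseq2} and \eqref{specialvalueseq6}. Before starting I record a parity reduction: conjugating by $s=\operatorname{diag}(-1,1,-1,1)\in K(N^2)$ sends $\mathcal C_0(bN)$ to $\mathcal C_0(-bN)$ and rescales the inner series \eqref{rewritesummationeq11} by $\det J(s,\cdot)^{-k}=(-1)^k$, so pairing $b$ with $-b$ in \eqref{introeq5} forces $E_{k,\eta}\equiv 0$ unless $\eta(-1)=(-1)^k$, i.e. $k\equiv\epsilon\bmod2$ with $\epsilon$ as in \eqref{specialvalueseq4}. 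In the excluded parity every $a(T)=0$ and the claim is trivial, so I assume $k\equiv\epsilon\bmod2$, which is exactly the hypothesis under which \eqref{specialvalueseq6} applies to each $L$-value below.

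For rank $1$ (part~(2a)) I substitute \eqref{specialvalueseq2} for $\zeta(k)$ (the case $N=1$, $k$ even) or \eqref{specialvalueseq6} for $L(k,\eta)$ into \eqref{Fourierexpansiontheoremrank1}. In either subcase the quotient of $(-2\pi\ii)^k/(k-1)!$ by the $L$-value loses its factor $\pi^k$ and leaves a rational multiple of $\ii^{k+\epsilon}$ -- which equals $\pm1$ since $k\equiv\epsilon$ makes $k+\epsilon$ even -- together with $1/B_{k,\bar\eta}$ and $1/G(\eta)$. Since $B_{k,\bar\eta}\in\Q(\eta)$, since $\sigma_{k-1,\eta}(e_{\scriptscriptstyle\hat N})$ and the factor $\eta(r_{\scriptscriptstyle\hat N})/\eta(2_{\scriptscriptstyle\hat N})$ lie in $\Q(\eta)$, and since $1/G(\eta)\in\Q(\eta,G(\eta))$ because that field contains the nonzero element $G(\eta)$, I obtain $a(T)\in\Q(\eta,G(\eta))$, which is \eqref{rank1rationality}.

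For rank $2$ (part~(2b)) the new feature is the half-integral power $\det(T)^{k-3/2}$. Positive definiteness gives $4\det(T)=-Df^2$ with $D<0$, so $\det(T)^{k-3/2}$ is a rational multiple of $\sqrt{|D|}$. I then reduce each of $L(k-1,\chi_D\eta)$, $L(k,\eta)$, $L(2k-2,\eta^2)$ to its primitive partner $\alpha,\eta,\beta$ of \eqref{rationalityeq10} by stripping the finitely many Euler factors $1-\psi(p)p^{-s}$, all of which lie in $\Q(\eta)$, and apply \eqref{specialvalueseq6} to each primitive $L$-value. Counting powers of $\pi$, the prefactor $(4\pi)^{2k-1}$ and the numerator supply $\pi^{2k-1}\cdot\pi^{k-1}$ while the two denominator $L$-values absorb $\pi^k\cdot\pi^{2k-2}$, for net exponent $0$. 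Counting Gauss sums, $\alpha$ contributes $G(\alpha)$ upstairs, $\eta$ and $\beta$ contribute $G(\eta)G(\beta)$ downstairs, and the explicit prefactor $G(\eta)$ cancels the $G(\eta)$, leaving $G(\alpha)/G(\beta)$; pairing the surviving $\sqrt{|D|}$ with $G(\alpha)$ produces $\sqrt{|D|}\,G(\alpha)$, while $1/G(\beta)$ lies in the target field as before. The residual power of $\ii$ is $\ii^{\,\epsilon-\epsilon_\alpha}$, and since $\chi_D(-1)=-1$ forces $\epsilon_\alpha=1-\epsilon$ this exponent is odd, contributing only $\pm\ii$. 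All remaining factors -- the Bernoulli numbers $B_{k-1,\bar\alpha}$, $B_{k,\bar\eta}$, $B_{2k-2,\bar\beta}$, the local values $\chi_p(r)$, $\chi_p(p^{n_p})$, and $K(k,T,\chi_p)$ -- lie in $\Q(\eta)$, yielding \eqref{rank2rationality}.

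Part~(1) then follows from the containment $L\subseteq\Q(\ii,\eta,\zeta_N)$ noted after the theorem, applied to both rank cases. Here $G(\beta)\in\Q(\eta,\zeta_N)$ because $\beta$ has conductor dividing $N$ and $\beta$-values lie in $\Q(\eta^2)\subseteq\Q(\eta)$. For $\sqrt{|D|}\,G(\alpha)$ I use the Gauss evaluation $G(\chi_D)=\ii\sqrt{|D|}$ (as $D<0$) to write $\sqrt{|D|}\,G(\alpha)=-\ii\,G(\chi_D)G(\alpha)$, and separate the prime-by-prime contributions: primes $p\mid N$ contribute roots of unity in $\Q(\zeta_N)$ and square roots $\sqrt p\in\Q(\zeta_N,\ii)$ via \eqref{sqrtoddp}--\eqref{sqrtevenp}, while each prime $p\mid D$ with $p\nmid N$ contributes a quadratic Gauss sum equal to $\pm\sqrt p$ or $\pm\ii\sqrt p$, whose pairing with the matching factor of $\sqrt{|D|}$ gives the rational $p$. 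This places $\sqrt{|D|}\,G(\alpha)\in\Q(\ii,\eta,\zeta_N)$ and hence $a(T)\in\Q(\ii,\eta,\zeta_N)$ for every $T$. The main obstacle will be this last rank-$2$ bookkeeping: carrying out the reduction of the imprimitive $L(k-1,\chi_D\eta)$ to $L(k-1,\alpha)$ while keeping every stripped Euler factor inside $\Q(\eta)$, and proving $\sqrt{|D|}\,G(\alpha)\in\Q(\ii,\eta,\zeta_N)$ in the general case where $|D|$ and $N$ share prime factors, which requires the separation formula for Gauss sums of characters of non-coprime modulus together with careful tracking of signs and roots of unity rather than the clean coprime multiplicativity.
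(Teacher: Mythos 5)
Your proposal does not prove the statement it was assigned. Theorem~\ref{Fourierexpansiontheorem} asserts the explicit Fourier expansion of $E_{k,\eta}$, i.e.\ that the coefficients $a(T)$ are given by the closed formulas \eqref{Fourierexpansiontheoremrank1} and \eqref{Fourierexpansiontheoremrank2}; your very first sentence takes those formulas ``as given'' and then deduces the rationality claims \eqref{rank1rationality} and \eqref{rank2rationality}. That is the content of Theorem~\ref{maintheorem}, which the paper proves in its last section --- it is not the content of Theorem~\ref{Fourierexpansiontheorem}, and assuming the displayed formulas is assuming the conclusion. In the present paper this theorem carries no proof at all: it is recalled verbatim as Theorem~3.4.1 of \cite{pierceschmidt2025siegel}, where the actual work is done. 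A genuine proof would have to compute the coefficients: unfold the sum \eqref{rewritesummationeq11} over $(\Gamma\cap h^{-1}P^{\scriptscriptstyle 1}(\Q)h)\backslash\Gamma$, pass to an adelic Eisenstein series, factor the $T$-th coefficient into local degenerate Whittaker (Siegel-series) integrals --- the ramified local integrals being precisely the $K(s,T,\chi_p)$ of \eqref{defofK}, which your proposal treats as a black box via Proposition~\ref{rationalityofKprop} --- and evaluate the archimedean integral, which is where $\det(T)^{k-\frac32}$, the powers of $\pi$, and the factor $G(\eta)$ in \eqref{Fourierexpansiontheoremrank2} actually come from. None of this appears in your write-up.

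What you did write is, in outline, a fair blind reconstruction of the paper's proof of Theorem~\ref{maintheorem}: the exact cancellation of $\pi$-powers against \eqref{specialvalueseq2} and \eqref{specialvalueseq6}, the reduction to the primitive characters $\alpha,\beta$ as in \eqref{rationalityeq12}, the pairing of the surviving $\sqrt{|D|}$ with $G(\alpha)$, and $K(k,T,\chi_p)\in\Q(\eta)$ all match the paper's Section~6 (with the difference that the paper establishes $\sqrt{|D|}\,G(\alpha)\in\Q(\eta,\zeta_N,\ii)$ through local $\varepsilon$-factors via Propositions~\ref{epsilonfactorgausssumprop} and \ref{GaussalphasqrtDprop}, prime by prime, rather than through your global separation $\sqrt{|D|}\,G(\alpha)=-\ii\,G(\chi_D)G(\alpha)$, which as you yourself note is delicate when $|D|$ and $N$ share prime factors --- the local formulation is exactly what makes that bookkeeping tractable). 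But aimed at Theorem~\ref{Fourierexpansiontheorem} this is a category error: the task was to derive the formulas for $a(T)$, not to analyze their arithmetic nature afterwards.
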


To prove Theorem \ref{maintheorem}, we start with the following proposition.

\begin{proposition}\label{GaussalphasqrtDprop}
     Suppose $r^2-4mn=-Df^2$ for $D<0$ a fundamental discriminant and $f$ an integer.  Let $\alpha$ be the primitive Dirichlet character $\bmod$ $a$ corresponding to $\chi_D\eta$ as explained above.  Then
     \begin{equation}
         G(\alpha)\sqrt{|D|}\in\Q(\eta,\zeta_N,\ii).
     \end{equation}
 \end{proposition}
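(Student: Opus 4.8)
The plan is to reduce the statement to a claim about a product of two Gauss sums and then analyze it one prime at a time. Since $D<0$, the classical evaluation of the quadratic Gauss sum gives $G(\chi_D)=\ii\sqrt{|D|}$, so that $\sqrt{|D|}=-\ii\,G(\chi_D)$ and
\[
G(\alpha)\sqrt{|D|}=-\ii\,G(\alpha)\,G(\chi_D).
\]
As $-\ii\in\Q(\ii)$, it suffices to show $G(\alpha)G(\chi_D)\in\Q(\eta,\zeta_N,\ii)$.

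Next I would factor all three characters into prime-power components. Writing $\chi_D=\prod_p\chi_{D,p}$ and $\eta=\prod_p\eta_p$ with the $\chi_{D,p},\eta_p$ primitive of $p$-power conductor, the character $\chi_D\eta$ has $p$-component $\chi_{D,p}\eta_p$, and since distinct local factors have coprime conductors the associated primitive character satisfies $\alpha=\prod_p\alpha_p$ with $\alpha_p=(\chi_{D,p}\eta_p)^{\mathrm{prim}}$. The standard multiplicativity of Gauss sums for primitive characters of pairwise coprime conductors then gives $G(\alpha)=u_\alpha\prod_pG(\alpha_p)$ and $G(\chi_D)=u_D\prod_pG(\chi_{D,p})$, where the correction factors $u_\alpha,u_D$ are products of character values. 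Every value of $\alpha$ (hence of each $\alpha_p$) is a value of $\chi_D\eta$ and so lies in $\Q(\eta)$, while the values of $\chi_D$ are $\pm1$; thus $u_\alpha\in\Q(\eta)$ and $u_D\in\{\pm1\}$. It therefore remains to control each local factor $G(\alpha_p)G(\chi_{D,p})$.

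I would then split into four cases according to how $p$ meets $N$ and $D$. For $p\nmid ND$ all three local characters are trivial and the factor is $1$. For $p\mid N$, $p\nmid D$ we have $\alpha_p=\eta_p$ and $\chi_{D,p}=1$, so the factor is $G(\eta_p)\in\Q(\eta,\zeta_{p^{n_p}})\subseteq\Q(\eta,\zeta_N)$. For $p\mid D$, $p\nmid N$ we have $\alpha_p=\chi_{D,p}$, so the factor is $G(\chi_{D,p})^2=\chi_{D,p}(-1)\cdot(\text{conductor of }\chi_{D,p})\in\Q$ by the evaluation of the square of a quadratic Gauss sum; this is the mechanism by which the irrational $\sqrt p$ in $\sqrt{|D|}$ is absorbed for primes outside $N$. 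The only remaining primes are the overlap primes $p\mid\gcd(D,N)$, which carry the heart of the argument.

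For odd $p\mid\gcd(D,N)$ the overlap is benign: $v_p(|D|)=1\le n_p$, so $\alpha_p$ has conductor dividing $p^{n_p}$ and $G(\alpha_p)\in\Q(\eta,\zeta_{p^{n_p}})\subseteq\Q(\eta,\zeta_N)$, while $G(\chi_{D,p})\in\{\pm\sqrt p,\pm\ii\sqrt p\}\subseteq\Q(\zeta_N,\ii)$ by \eqref{sqrtoddp}. The genuinely delicate prime is $p=2$. Since $\eta$ is primitive and there is no nontrivial character mod $2$, one has $v_2(N)\neq1$; when $v_2(N)\ge3$ one gets $\sqrt2\in\Q(\zeta_8)\subseteq\Q(\zeta_N)$ by \eqref{sqrtevenp} and the factor lies in $\Q(\eta,\zeta_N)$ directly. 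The main obstacle is the case $v_2(N)=2$: here $\sqrt2\notin\Q(\zeta_N,\ii)$ in general, so one cannot place the Gauss sums in $\Q(\zeta_N)$ separately. However, a primitive character mod $4$ is forced to be $\chi_{-4}$, so $\eta_2=\chi_{-4}$, and a short explicit computation of $\chi_{-4}\cdot\chi_{D,2}$ for each of the three possibilities $\chi_{D,2}\in\{\chi_{-4},\chi_8,\chi_{-8}\}$ shows that the two occurrences of $\sqrt2$ in $G(\alpha_2)$ and $G(\chi_{D,2})$ cancel, leaving $G(\alpha_2)G(\chi_{D,2})\in\Q(\ii)$. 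Assembling the local factors with $u_\alpha\in\Q(\eta)$ and $u_D\in\{\pm1\}$ gives $G(\alpha)G(\chi_D)\in\Q(\eta,\zeta_N,\ii)$, and multiplying by $-\ii$ finishes the proof. The one place requiring real care is precisely this $p=2$, $v_2(N)=2$ overlap, where the containment must come from an explicit cancellation of $\sqrt2$ between the two Gauss sums rather than from bounding each one.
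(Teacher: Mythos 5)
Your proof is correct, but it runs on different machinery than the paper's. The paper stays inside the adelic framework it has already set up: via Proposition~\ref{epsilonfactorgausssumprop} it writes $G(\alpha)\sqrt{|D|}$ as a product of local terms $A_p=\sqrt{p^{v_p(a)}p^{v_p(|D|)}}\,\varepsilon(\tfrac12,\chi_p\sigma_p,\psi_p)$ and controls each one using the $\varepsilon$-factor properties \eqref{rationalityeq17}--\eqref{rationalityeq18}, invoking \eqref{sqrtoddp} and \eqref{sqrtevenp} when a stray $\sqrt{p}$ survives. You instead eliminate $\sqrt{|D|}$ at the outset via Gauss's evaluation $G(\chi_D)=\ii\sqrt{|D|}$ (valid since $D<0$), reduce to $G(\alpha)G(\chi_D)$, and use classical multiplicativity of Gauss sums over the prime-power components. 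The case structure that results is essentially the same as the paper's (primes dividing only $N$, only $D$, or both, with $p=2$ special), and your delicate case $v_2(N)=2$, $2\mid D$ is the exact counterpart of the paper's observation that $v_2(a)+v_2(D)$ is even there: your explicit cancellation of $\sqrt{2}$ between $G(\alpha_2)$ and $G(\chi_{D,2})$ for $\eta_2=\chi_{-4}$ is the same phenomenon seen through Gauss sums rather than through the parity of conductor exponents. What your route buys is elementarity --- no local $\varepsilon$-factors or integral formulas, only the classical evaluation of quadratic Gauss sums and the twisting identity $G(\chi_1\chi_2)=\chi_1(m_2)\chi_2(m_1)G(\chi_1)G(\chi_2)$; what the paper's route buys is uniformity with the rest of its local-global setup, where the same $\varepsilon$-factor formalism is already needed for Proposition~\ref{epsilonfactorgausssumprop}. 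One small point worth making explicit in your write-up: the multiplicativity correction factors involve values of the local components $\alpha_p$, and these lie in $\Q(\eta)$ because each $\alpha_p(x)$ equals $\alpha(\tilde x)$ for a suitable CRT lift $\tilde x$; you assert this correctly but it deserves a sentence.
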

 
 \begin{proof}
      Let the quadratic character $\chi_D$ correspond to the adelic character $\sigma=\otimes\sigma_p$ so that $\alpha$ corresponds to $\chi\sigma=\otimes\chi_p\sigma_p$.  By Proposition \ref{epsilonfactorgausssumprop}, 
 \begin{align}\label{rationalitygausssumeq1}
     &G(\alpha)\sqrt{|D|}=\sqrt{a|D|}\cdot \alpha(-1)\prod\limits_{p\mid a} \varepsilon\left(\tfrac{1}{2},\chi_p\sigma_p,\psi_p\right)\nonumber\\
     &\quad \overset{\eqref{epsilonfactordef}}{=}\sqrt{a|D|}\cdot\alpha(-1)\left(\prod\limits_{p\mid a}p^{a(\chi_p\sigma_p)/2}(\chi_p\sigma_p)(p^{v_p(a)})\right)\prod\limits_{p\mid a}\, \int\limits_{\Z_p^\times}\chi_p(\mu)^{-1}\sigma_p(\mu)^{-1}\psi_p(p^{-v_p(a)}\mu)\, d\mu.
 \end{align}   We define
 \begin{align}
     A_p &:=\sqrt{p^{v_p(a)}p^{v_p(|D|)}}\, \varepsilon\left(\tfrac{1}{2},\chi_p\sigma_p,\psi_p\right)\nonumber\\
     &=\sqrt{p^{v_p(a)}p^{v_p(|D|)}}\, p^{a(\chi_p\sigma_p)/2}(\chi_p\sigma_p)(p^{v_p(a)})\int\limits_{\Z_p^\times}\chi_p(\mu)^{-1}\sigma_p(\mu)^{-1}\psi_p(p^{-v_p(a)}\mu)\, d\mu.
 \end{align} It suffices to show that for each prime $p\mid a$, $A_p\in \Q(\eta,\zeta_N,\ii)$. Fix $p\mid a$.  Since $\sigma_p$ is quadratic,
 \begin{equation}\label{rationalityeq19}
     \varepsilon\left(\tfrac{1}{2},\sigma_p,\psi_p\right)\overset{\eqref{rationalityeq17}}{\in}\Q(\ii).
 \end{equation}  If $p\nmid N$, $\chi_p$ is unramified and
 \begin{equation}\label{rationalityeq20}
     \varepsilon\left(\tfrac{1}{2},\chi_p\sigma_p,\psi_p\right)\overset{\eqref{rationalityeq18}}{=}\chi_p(p^{-v_p(a)})\varepsilon\left(\tfrac{1}{2},\sigma_p,\psi_p\right)\in\Q(\eta,\ii).
 \end{equation}  We also note that if $p\mid N$ and $\chi_p$ is quadratic, then $\chi_p\sigma_p$ is quadratic and 
 \begin{equation}\label{rationalityeq21}
     \varepsilon\left(\tfrac{1}{2},\chi_p\sigma_p,\psi_p\right)\overset{\eqref{rationalityeq17}}{\in}\Q(\ii).
 \end{equation}
 Suppose first that \underline{$p\nmid N$ and $p\mid |D|$}. Then $v_p(a)=a(\chi_p\sigma_p)=a(\sigma_p)=v_p(D)$.  So
 \begin{equation}
     A_p=p^{v_p(a)}\varepsilon\left(\tfrac{1}{2},\chi_p\sigma_p,\psi_p\right)\overset{\eqref{rationalityeq20}}{\in}\Q(\eta,\ii).
 \end{equation} Next, suppose \underline{$p\mid N$ and $p\nmid |D|$}.  Then $v_p(a)=a(\chi_p\sigma_p)=a(\chi_p)=v_p(N)$ and $v_p(D)=0$.  We have
  \begin{align}
      A_p =p^{v_p(N)}(\chi_p\sigma_p)(p^{v_p(N)})\int\limits_{\Z_p^\times}\chi_p(\mu)^{-1}\sigma_p(\mu)^{-1}\psi_p(p^{-v_p(N)}\mu)\, d\mu\in\Q(\eta,\zeta_N),
  \end{align} since for $\mu\in\Z_p^\times$, $\psi_p$ takes values in $\langle \zeta_{p^{v_p(N)}}\rangle$.

 Suppose that \underline{$p\mid N$, $p\mid |D|$, and $\chi_p^2\neq 1$}. First assume $p$ is odd.  Then $v_p(a)=a(\chi_p)=v_p(N)$ and $v_p(D)=1$.  By \eqref{sqrtoddp}, $A_p\in\Q(\eta,\zeta_N)$.  Now assume $p=2$.  Then $a(\sigma_p)\leq 3<4\leq a(\chi_p)$ so that $v_p(a)=v_p(N)$ and $v_p(D)\in\{2,3\}$.  Then if suffices to show 
  \begin{equation}
      \sqrt{2^{v_2(D)}}\in\Q(\zeta_N,\ii).
  \end{equation}  This is clear if $v_2(D)=2$.  If $v_2(D)=3$, we are done by \eqref{sqrtevenp}.  

  Finally, suppose \underline{$p\mid N$, $p\mid |D|$, and $\chi_p^2= 1$}.  By \eqref{rationalityeq21}, it suffices to show \begin{equation}
      \sqrt{p^{v_p(a)}p^{v_p(D)}}~\in~\Q(\zeta_N).
  \end{equation}  If $v_p(a)+v_p(D)$ is even, this is clear.  Otherwise, it is enough to show $\sqrt{p}\in\Q(\zeta_N)$. For odd $p$, this is true by \eqref{sqrtoddp}.  If $p=2$, 
   $v_2(a)+v_2(D)$ is odd only if $v_2(N)=3$ and then \eqref{sqrtevenp} applies.  This completes the proof.
 \end{proof}

Now we prove the main theorem.

\begin{proof}[Proof of Theorem~\ref{maintheorem}]
    For part $i)$, notice the Gauss sum defined in \eqref{Gausssumeq1} takes values in $\Q(\eta,\zeta_N)$.  If $N=1$, it is known that the Fourier coefficients are rational (see \cite{Siegel1939}).  We will assume throughout that $N>1$.

From Theorem \ref{Fourierexpansiontheorem}, we see that in rank 0, $a(T)\in\{0,1\}\subseteq \Q$.  In rank 1, $a(T)$ is as in \eqref{Fourierexpansiontheoremrank1}.  Suppose $m>0$ and $r_{\scriptscriptstyle N}=\frac{(2m)_{\scriptscriptstyle N}}{N}$.  Then,
\begin{align}\label{rationalityeq6}
    a(T) &\overset{\eqref{specialvalueseq6}}{=}\frac{(-1)^{-1+\frac{k+\epsilon}2}2k\ii^{k+\epsilon}m^k \sigma_{k-1,\eta}(e_{\scriptscriptstyle \hat N})}{G(\eta)B_{k,\bar\eta}}\frac{\eta(r_{\scriptscriptstyle{\hat N}})}{\eta(2_{\scriptscriptstyle{\hat N}})}e_{\scriptscriptstyle N}^{k-1}\in\Q(\eta,\zeta_N),
\end{align} since $\sigma_{k-1,\eta}, B_{k,\bar\eta}\in\Q(\eta)$ and $k+\epsilon$ is always even.

In rank 2, $a(T)$ is as in \eqref{Fourierexpansiontheoremrank2}.  It is easy to see that 
 \begin{equation}\label{rationalityeq8}
     N^{2-2k} f_{\scriptscriptstyle\hat N}^{3-2k}\eta( f_{\scriptscriptstyle\hat N}^2)\tilde H_{D,k,\eta}( e_{\scriptscriptstyle\hat N}, f_{\scriptscriptstyle\hat N})\Bigg(\prod_{\substack{p<\infty\\p\mid N\\p\nmid r}}\chi_p(r)\Bigg)\Bigg(\prod_{\substack{p<\infty\\p\mid N\\p\mid r}}p^{n_p(2-k)}\chi_p(p^{n_p})\Bigg)\in\Q(\eta).
 \end{equation}  From Proposition \ref{rationalityofKprop}, $K(k,T,\chi_p)\in\Q(\eta)$.  So it remains to show
 \begin{equation}\label{rationalityeq9}
     \frac{(4\pi)^{2k-1}\det(T)^{k-\frac32}}{2(2k-2)!}\frac{L(k-1,\chi_D\eta)}{L(k,\eta)L(2k-2,\eta^2)}G(\eta)\in\Q(\eta,\zeta_N,\ii).
 \end{equation}   Since the $L$-functions of a Dirichlet character and its associated primitive Dirichlet character differ only by finitely many Euler factors, it is easy to see that
 \begin{equation}\label{rationalityeq12}
     \frac{L(k-1,\chi_D\eta)}{L(k-1,\alpha)}\in\Q(\eta) \quad \text{ and } \quad \frac{L(2k-2,\eta^2)}{L(2k-2,\beta)}\in\Q(\eta).
 \end{equation}
 To prove \eqref{rationalityeq9}, it now suffices to consider
 \begin{align}\label{rationalityeq14}
     &\frac{(4\pi)^{2k-1}\det(T)^{k-\frac32}}{2(2k-2)!}\frac{L(k-1,\alpha)}{L(k,\eta)L(2k-2,\beta)}G(\eta)\nonumber\\
     &\qquad\qquad\qquad \overset{\eqref{specialvalueseq6}}{=}\pm 2^{2k-1}\det(T)^{k-\frac32}\ii^{(-1)^{k+1}}\frac{N^k b^{2k-2}k}{a^{k-1}}\frac{B_{k-1,\bar\alpha}}{B_{k,\bar\eta}B_{2k-2,\bar\beta}}\frac{G(\alpha)}{G(\beta)}\nonumber\\
     &\qquad\qquad\qquad =\pm 4f^{2k-3}|D|^{k-\frac32}\ii^{(-1)^{k+1}}\frac{N^k b^{2k-2}k}{a^{k-1}}\frac{B_{k-1,\bar\alpha}}{B_{k,\bar\eta}B_{2k-2,\bar\beta}}\frac{G(\alpha)}{G(\beta)}.
 \end{align}  Observing that $G(\beta)\in \Q(\beta,\zeta_b)\subseteq \Q(\eta,\zeta_N)$ and $G(\alpha)\sqrt{|D|}\in \Q(\eta,\zeta_N,\ii)$ by Proposition \ref{GaussalphasqrtDprop}, we have now proven the claim \eqref{rationalityeq9} and hence part $i)$ of the theorem.  Part $ii)$ is easy to see.  
\end{proof}

\bibliography{my}{}
\bibliographystyle{plain}

\end{document}